\chardef\bslash=`\\ % p. 424, TeXbook
\newtheorem{thm}{theorem}[section]
\newtheorem{corollary}[thm]{Corollary}
\newtheorem{lemma}[thm]{lemma}
\newtheorem{proposition}[thm]{proposition}
\theoremstyle{definition}
\newtheorem{remark}{Remark}[section]
\theoremstyle{remark}
\newcommand{\eval}[2][\right]{\relax
  \ifx#1\right\relax \left.\fi#2#1\rvert}
\begin{document}
\title[Quasi-energy function]{A quasi-energy function for Pixton diffeomorphisms defined by generalized Mazur knots\thanks{The research was done with the support of Russian National Foundation (project
23-71-30008).}}
\author[T. Medvedev]{Timur Medvedev}
\address{Laboratory of Algorithms and Technologies for Network Analysis\\
HSE University\\
136 Rodionova Street\\
Niznhy Novgorod, Russia 
}
\email{mtv2001@mail.ru} 
\author[O. Pochinka]{Olga Pochinka}
\address{International Laboratory of Dynamical Systems and Applications\\ HSE University\\
25/12 Bolshaya Pecherckaya Street\\
Niznhy Novgorod, Russia}

\date{Received on MONTH, YEAR}
\issueinfo{VOL}{NUM}{MONTH}{YEAR}
\doiinfo{10.1007/DOI-NUMBER}

\begin{abstract}
In this paper we give a lower estimate for the number of critical points of the Lyapunov function for Pixton diffeomorphisms (i.e. Morse-Smale diffeomorphisms in dimension 3 whose chain recurrent set consists of four points: one source, one saddle and two sinks).
Ch.~Bonatti and V.~Grines proved that the class of topological equivalence of such diffeomorphism  $f$ is completely defined by the equivalency class of the Hopf knot $L_{f}$ that is the knot in the generating class of the fundamental group of the manifold $\mathbb S^2\times\mathbb S^1$. They also proved that there are infinitely many such classes and that any Hopf knot can be realized by a Pixton diffeomorphism. D.~Pixton proved that diffeomorphisms defined by the standard Hopf knot $L_0=\{s\}\times \mathbb S^1$ have an energy function (Lyapunov function) whose set of critical points coincides with the chain recurrent set whereas the set of critical points of any Lyapunov   function for Pixton diffeomorphism with nontrivial (i.e. non equivalent to the standard) Hopf knot is strictly larger than the chain recurrent set of the diffeomorphism. The Lyapunov function for Pixton diffeomorphism with minimal number of critical points is called the quasi-energy function. In this paper we construct a quasi-energy function for Pixton diffeomorphisms defined by a generalized Mazur knot.
\keywords{Hopf knot \and Mazur knot \and Pixton diffeomorphism \and quasi-energy function}
% \PACS{PACS code1 \and PACS code2 \and more}
\end{abstract}

\maketitle
\tableofcontents

\section{Introduction and the main results}
\label{intro}
	Let $M^n$ be a smooth closed $n$-manifold with a metric $d$ and let $f:M^n\to M^n$ be a diffeomorphism.  For two given points $x,y\in M^n$ a sequence of points  
$x=x_0,\dots,x_m=y$ is called an {\em $\varepsilon$-chain of length $m\in\mathbb N$} connecting $x$ to $y$ if $d(f(x_{i-1}),x_{i})<\varepsilon$ for $1\leqslant i\leqslant m$ (Fig. \ref{chain}).
\begin{figure}[h]\centerline{\includegraphics[height=5cm]{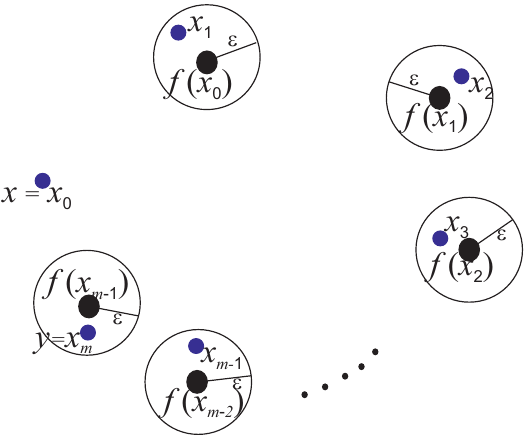}}\caption{\small An $\varepsilon$-chain of length $m\in\mathbb N$}\label{chain}\end{figure}
	
	A point $x\in M^n$ is called {\em chain recurrent} for the diffeomorphism $f$ if for every $\varepsilon>0$ there is an $\varepsilon$-chain of length $m$ connecting $x$ to itself for some $m$ ($m$ depends on $\varepsilon>0$). The {\em chain recurrent set}, denoted by $\mathcal R_{f}$, is the set of all chain recurrent points of $f$. Define the equivalence on $\mathcal R_{f}$ by the rule:  $x\sim y$ if for every  $\varepsilon>0$ there is are $\varepsilon$-chains connecting $x$ to $y$ and $y$ to $x$. This equivalence relation defines equivalence classes called {\em chain
recurrence classes} or {\em chain components}.
	
	If the chain recurrent set of a diffeomorphism $f$ is finite then it consists of periodic points. A periodic point $p\in\mathcal R_f$ of period  $m_p$ is said to be {\em hyperbolic} if absolute values of all the eigenvalues of the Jacobian matrix $\left(\frac{\partial f^{m_p}}{\partial x}\right)\vert_{p}$ are not equal to 1. If absolute values of all these eigenvalues are greater (less) than 1 then  $p$ is called a {\em sink} (a {\em source}). Sinks and sources are called {\em nodes}. If a hyperbolic periodic point is not a node  then it is called a {\em saddle}.
	
	Let $p$ be a hyperbolic periodic point of a diffeomorphism $f$ whose chain recurrent set is finite. The {\em Morse index} of $p$, denoted by $\lambda_p$, is the number of eigenvalues of Jacobian matrix whose absolute values are greater than 1. The {\em stable manifold} $W^s_p=\{x\in M^n:\lim\limits_{k\to+\infty}d(f^{km_p}(x),p)=0\}$ and the {\em unstable manifold} $W^u_p=\{x\in M^n:\lim\limits_{k\to+\infty}d(f^{-km_p}(x),p)=0\}$ of $p$ are smooth manifolds diffeomorphic to $\mathbb R^{\lambda_p}$ and $\mathbb R^{n-\lambda_p}$, respectively.  Stable and unstable manifolds are called {\em invariant manifolds}. 
	A connected component of the set $W^u_p\setminus p$ (resp. $W^s_p\setminus p$) is called a {\em unstable} (resp. {\em stable}) {\em separatrice} of  $p$.
	
	A diffeomorphism $f:M^n\to M^n$ is called a {\em Morse-Smale} diffeomorphism if
	\begin{enumerate}
	
\item its chain recurrent set $\mathcal R_f$ consists of finite number of hyperbolic  points;
	
	\item for any two points $p$, $q\in\mathcal R_f$ the manifolds $W^s_p$, $W^u_q$ intersect transversally. 	
\end{enumerate}
	
	C.~Conley in \cite{Conley1978} gave the following definition: a {\it Lyapunov function} for a Morse-Smale diffeomorphism $f:M^n\to M^n$ is a continuous function $\varphi:M^n\to\mathbb{R}$ satisfying
	\begin{itemize}
		\item $\varphi(f(x))<\varphi(x)$ if $x\notin R_f$;
		\item $\varphi(f(x))=\varphi(x)$ if $x\in R_f$.
	\end{itemize}
He proved the existence of a   Lyapunov function for arbitrary Morse-Smale diffeomorphism.

Notice that every Morse-Smale diffeomorphism $f$ has a {\em Morse-Lyapunov function}\footnote{This function can be constructed, for example, by {\em suspension}. Consider the topological flow $\hat f^t$ on the manifold $M^n\times\mathbb R$ defined by $\hat f^t(x)=x+t$. Define the diffeomorphism $g:M^n\times\mathbb R\to M^n\times\mathbb R$ by $g(x,\tau)=(f(x),\tau-1)$ and let $G=\{g^k~,k\in\mathbb Z\}$ and $W=(M^n\times\mathbb R)/G$. Denote by $p_{_{W}}:M^n\times\mathbb R\to W$ the natural projection and denote by $f^t$ the flow on $W$ defined by $f^t(x)=p_{_{W}}(\hat f^t(p^{-1}_{_{W}}(x)))$. The flow $f^t$ is called the {\em suspension over $f$}. By construction the chain recurrent set of $f^t$ consists of the finite number of periodic orbits $\delta_i=p_{_{W}}(\mathcal O_i\times\mathbb R),\,i\in\{1,\dots, k_f\}$ and this means that the suspension $f^t$ is a Morse-Smale flow. A Lyapunov function for these flows is constructed in \cite{Meyer1968}. Then the restriction of this function on $M$ is the desired Lyapunov function for $f$.},
i.e. a Lyapunov function $\varphi:M^n\to\mathbb R$ which is a continuous Morse function\footnote{Recall a notion a continuous Morse function on  $M^n$ given by M.~Morse \cite{morse1959topologically}. 	Let $\varphi:M^n\to\mathbb R$ be a continuous function with real values. A point $p\in M^n$ is said to be {\it regular} if there is a neighborhood $V_p\subset M^n$ of $p$ and a homeomorphism onto its image $\phi_p:y\in V_p\mapsto \phi_p(y)=(x_1(y),\cdots, x_n(y))\in\mathbb R^n$ such that 
	$$x_i(p) = 0, i\in\{1,\cdots, n\},\,\,\varphi(y)=\varphi(p)+x_n(y),\,y\in V_p.$$ Otherwise the point $p$ is called {\it critical}. Denote by $Cr_\varphi$ the set of critical points of $\varphi$. A critical point $p$ is said to be {\it non-degenerate critical point of index $\lambda_p$} if there are coordinates $x_i,i\in\{1,\cdots, n\}$ of the critical point $p$ and $\lambda_p\in\{0,\cdots,n\}$ such that  $$\varphi(y)=\varphi(p)-\sum\limits_{i=1}^{\lambda_p}x^2_i(y)+\sum\limits_{i=\lambda_p+1}^nx^2_i(y),\,\,y\in V_p.$$ A Morse function whose every critical point is non-degenerate is called a {\it continuous Morse function}.} such that each periodic point $p\in\mathcal R_f$ is its non-degenerate critical point of index $\lambda_p$ with Morse coordinates $(V_p,\phi_p:y\in V_p\mapsto (x_1(y),\dots,x_n(y))\in\mathbb R^n$ and $$\phi_p^{-1}(Ox_1\dots x_{\lambda_p})\subset W^u_p,\,\,\phi_p^{-1}(Ox_{\lambda_p+1}\dots x_n)\subset W^s_p.\quad\quad\quad(*)$$
Obviously, the hyperbolic periodic points of $f$ need to be critical for its Morse-Lyapunov function. 	If a Morse-Lyapunov function $\varphi$ of $f$ has no critical points outside $\mathcal R_f$ then following \cite{Pixton1977} we call it an {\em energy function} for the Morse-Smale diffeomorphism $f$.
	
	The proof of existence of an energy Morse function for a Morse-Smale diffeomorphism of the circle is an easy exercise. D.~Pixton \cite{Pixton1977} in 1977 proved that every Morse-Smale diffeomorphism of a surface has an energy function. There he also constructed an example of a Morse-Smale diffeomorphism on the 3-sphere which admits no energy function. The obstacle to existence of an energy function in his example was the {\em wild embedding} of the saddle separatrices in the ambient manifold (i.e. the closure of the separatrice is not a submanifold of the ambient space). From \cite{MePo2018} it follows that there are Morse-Smale diffeomorphisms with no energy function on manifolds of any dimension $n>2$. Therefore, following \cite{GrLauPo2009} for a Morse-Smale diffeomorphism $f$ we call a Morse-Lyapunov function with the minimal number of critical points (denote it by $\rho_{_f}$) a {\em quasi-energy function}. 
	Notice that $\rho_{_f}$ is a topological invariant, i.e. if two diffeomorphisms $f,f':M^n\to M^n$ are topologically conjugate (that is there is a diffeomorphism $h:M^n\to M^n$ such that $h\circ f=f'\circ h$) then $\rho_{_f}=\rho_{_{f'}}$. Indeed, if $\varphi$ is a quasi-energy function for $f$ then $\varphi'=\varphi h^{-1}$ is a quasi-energy function for $f'$. 
	
In this paper we give a lower estimate of $\rho_{_f}$ for Pixton diffeomorphisms. The class of {\it Pixton diffeomorphisms} $\mathcal P$ is defined in the following way.  Every diffeomorphism $f\in\mathcal P$ is a Morse-Smale 3-diffeomorphism whose chain recurrent set consists of four points: one source, one saddle and two sinks (for details see section \ref{Pico}). Notice that Pixton's example is a diffeomorphism of this class.
	According to \cite{BoGr} the class of topological conjugacy of a diffeomorphism $f\in\mathcal P$ is completely defined by the equivalence class of the {\em Hopf knot} $L_{f}$ associated to $f$,   i.e. a knot in the generating class of the fundamental group of the manifold $\mathbb S^2\times\mathbb S^1$ (see Proposition \ref{Pidy}). Moreover, any Hopf knot can be realized as a Pixton diffeomorphism.
	
Recall that a {\it knot} in $\mathbb S^2\times\mathbb  S^1$ is a smooth embedding  $\gamma:\mathbb S^1\to \mathbb S^2\times\mathbb S^1$ or the image of this embedding $L=\gamma(\mathbb S^1)$. Two knots $L,L'$ are said to be {\it equivalent} if there is a homeomorphism $h:\mathbb S^2\times\mathbb S^1\to \mathbb S^2\times\mathbb S^1$ such that $h(L)=L'$ and whose induced isomorphism $h_*:\pi_1(\mathbb S^2\times\mathbb S^1)\to\pi_1( \mathbb S^2\times\mathbb S^1)$ acts as the identity map.  Two knots $\gamma,\gamma'$ are {\it smoothly homotopic} if there exists a smooth map  $\Gamma:\mathbb S^1\times[0,1]\to \mathbb S^2\times\mathbb S^1$ such that  $\Gamma(s,0)=\gamma(s)$ and $\Gamma(s,1)=\gamma'(s)$ for every $s\in\mathbb S^1$. If $\Gamma|_{\mathbb S^1\times\{t\}}$ is an embedding for every  $t\in[0,1]$ then the knots are said to be {\it isotopic}. 
	
Any Hopf knot $L \subset \mathbb S^2 \times\mathbb S^1$ is smoothly homotopic to the standard Hopf knot $L_0=\{s\}\times \mathbb S^1$ (see, for example, \cite{K-L}) but generally it is neither isotopic nor equivalent to it. B.~Mazur \cite{Mazur} constructed the Hopf knot  $L_M$ which we call the {\em Mazur knot} and which is non-equivalent and non-isotopic to $L_0$ (see Fig. \ref{not}).
\begin{figure}[h]\centerline{\includegraphics
[width=11 cm]{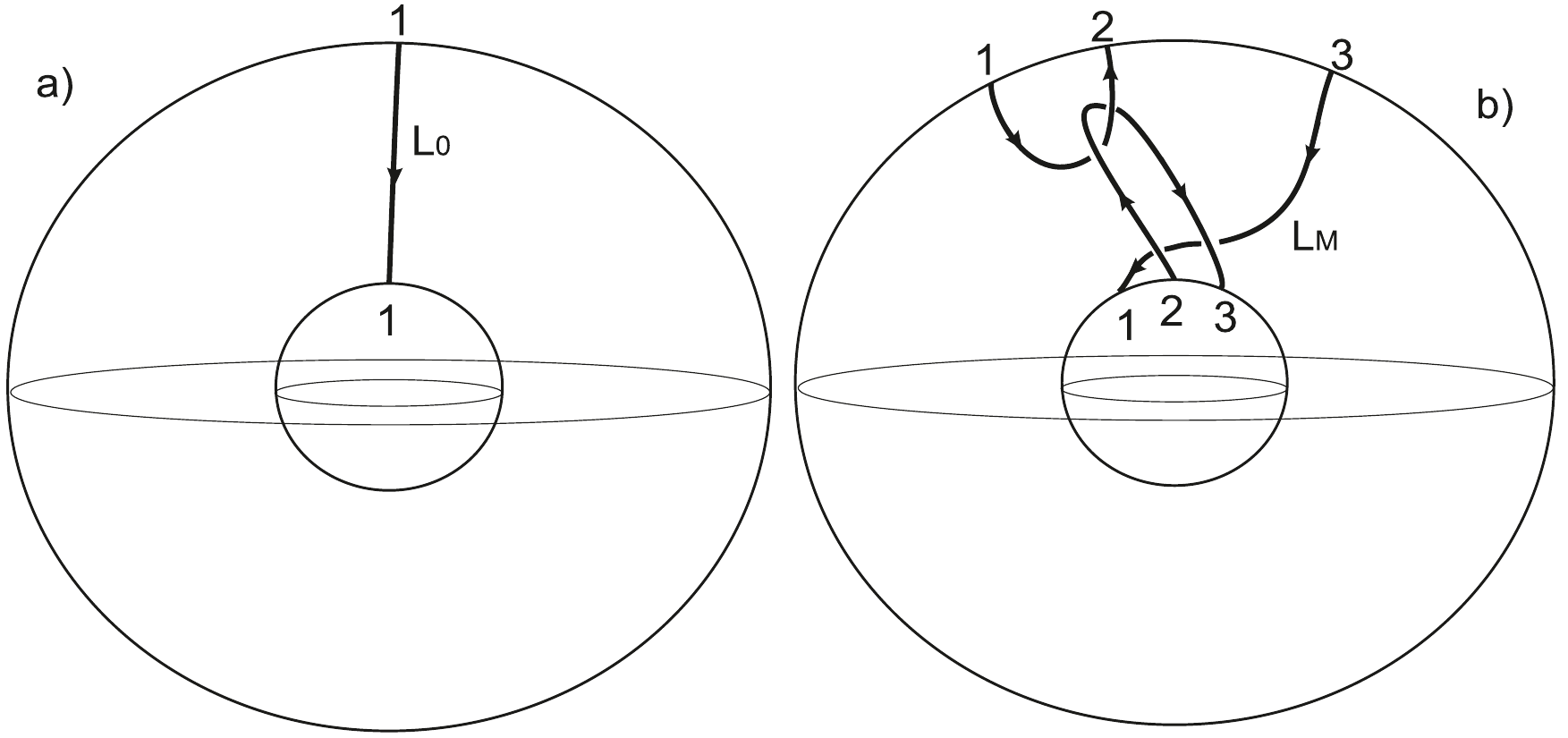}}\caption{\small Two non-isotopic and non equivalent Hopf knots $L_0$ and $L_M$: a) the standard Hopf knot $L_0$; b) the Mazur knot $L_{M}$}\label{not}\end{figure}
It follows from the results of \cite{AMP} that there exists a countable family of pairwise non-equivalent Hopf knots $L_{n},\,n\in\mathbb N$ which are {\em generalized Mazur knots} (Fig.~\ref{mn}).
\begin{figure}[h]\center{\includegraphics[width=
0.5\linewidth]{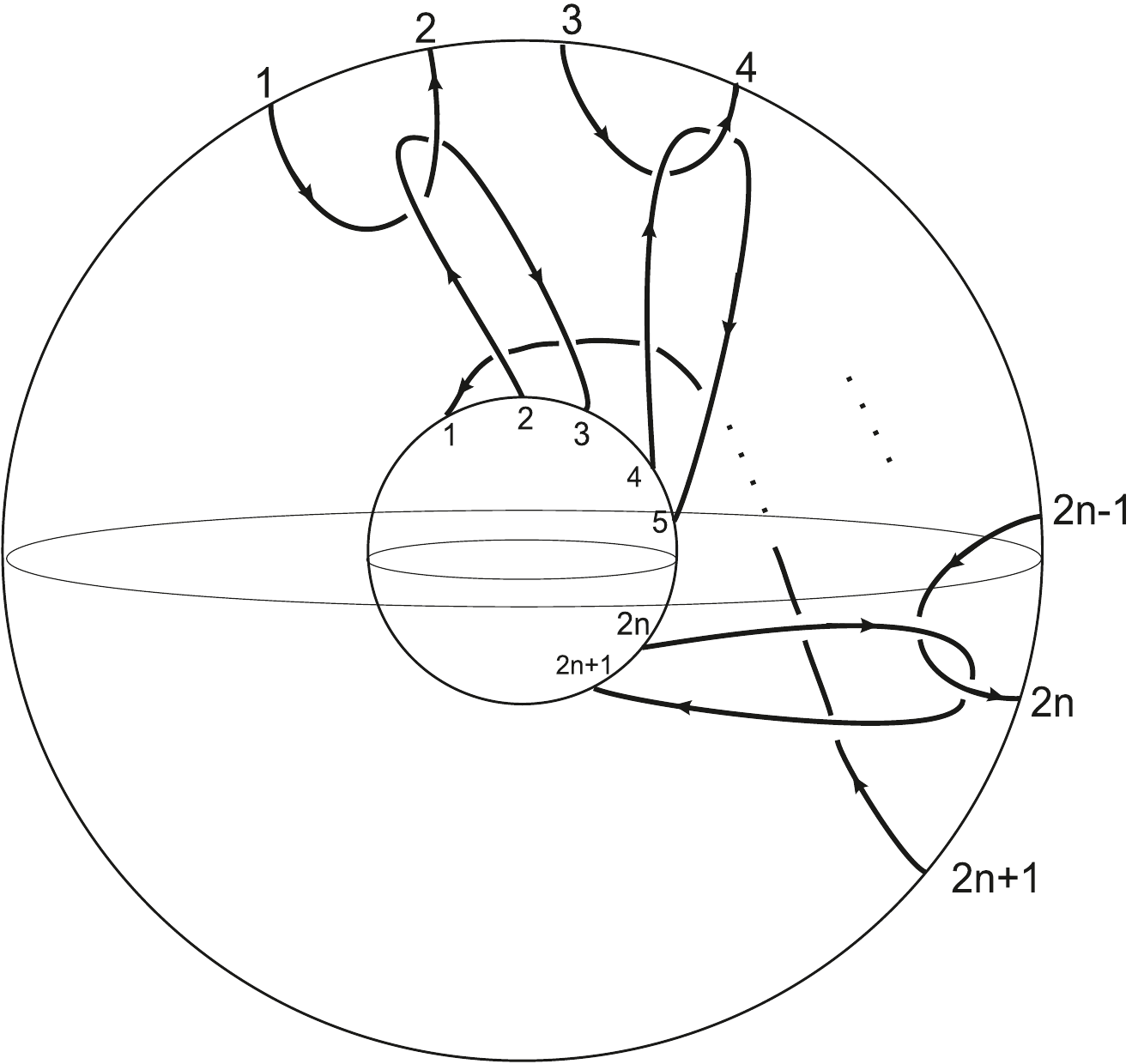}}\caption{\small A generalized Mazur knot $L_{n}$}\label{mn}
\end{figure}
	
According to \cite{GrLauPo2008} a Pixton diffeomorphism $f$ admits an energy Morse function if and only if the knot $L_f$ is {\em trivial} (i.e. equivalent to the standard one). If the knot $L_f$ is not trivial then the number $\rho_{_f}$ of the critical points of a quasi-energy Morse function of $f$ is even as the  classical Morse inequities are true for a continuous Morse function, and hence, $$\rho_{_f}\geqslant 6.$$
The main result of this paper is the proof of Theorem \ref{exact}.
\begin{thm}\label{exact} Let $f$ be a Pixton diffeomorphism ($f\in\mathcal P$) and let the generalized Mazur knot $L_{n},\,n\in\mathbb N$ be its knot. Then the number $\rho_{_f}$ of critical points of a quasi-energy function of $f$ is calculated by\footnote{For $n=1$ Theorem \ref{exact} is proved in \cite{GrLauPo2009}.} $$\rho_{_f}=4+2n.$$
\end{thm}

\section{Construction of Pixton diffeomorphisms}\label{Pico}
In dynamics a wild Artin-Fox arc was for the first time introduced by D.~Pixton in \cite{Pixton1977} where he constructed a Morse-Smale diffeomorphism on the 3-sphere with the unique saddle whose invariant manifolds form an Artin-Fox arc. We give the modern construction of these diffeomorphisms following Ch.~Bonatti and V.~Grines \cite{BoGr} where Pixton diffeomorphisms were also classified (see also \cite{GrMePo2016}, \cite{MePo2018}).
	
For ${\bf x}=(x_1,x_2,x_3)\in\mathbb R^3$ denote $||{\bf x}||=\sqrt{x_1^2+x_2^2+x_3^2}$. Let 
	$h:\mathbb R^3\to\mathbb R^3$ be the diffeomorphism defined by  $h({\bf x})=\frac{{\bf x}}{2}$ and $O(0,0,0)$ be the origin in $\mathbb R^3$. Define the map $p:\mathbb R^3\setminus O\to\mathbb S^{2}\times\mathbb S^1$ by 
	$$p({\bf x})=\left(\frac{{\bf x}}{||{\bf x}||}, 	\log_2(||{\bf x}||)\pmod 1\right).$$
	Let $L\subset (\mathbb S^{2}\times\mathbb S^1)$ be a Hopf knot and let $U(L)$ be its tubular neighborhood. Then the set $\bar L=p^{-1}(L)$ is the $h$-invariant arc in $\mathbb R^3$ 
	and $U(\bar L)=p^{-1}(U(L))$ is its $h$-invariant neighborhood diffeomorphic to $\mathbb{D}^{2}\times\mathbb R^1$ (Fig. \ref{af}). 
\begin{figure}[h!]	\centerline{\includegraphics[width=5 true cm, height=7.5 true cm]{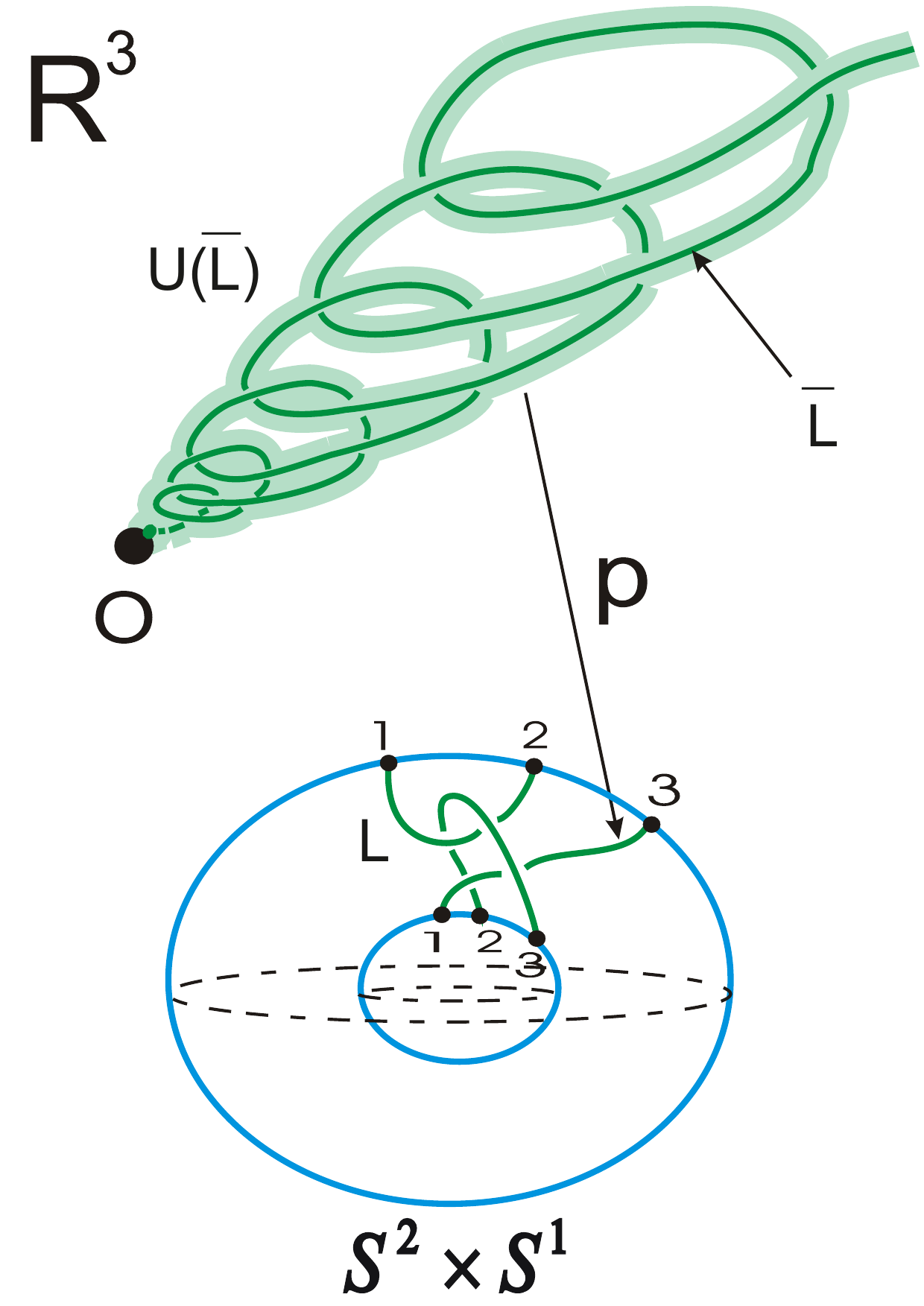}}\caption{\small Suspension of a Hopf knot}\label{af}\end{figure}
	
	Let {$C=\{(x_1,x_2,x_3)\in\mathbb R^3~:~x_2^2+x_{3}^2\leqslant 4\}$} and let $g^t:C\to C$ be the flow  defined by $$g^t(x_1,x_2,x_3)=(x_1+t,x_2,x_3).$$ Then there is a diffeomorphism ${\zeta}:{U(L)}\to C$ that conjugates  $h\vert_{{U(L)}}$ and $g=g^1|_C$. Define the flow $\phi^t$ on $C$ by:  
	$$\begin{cases}
		\dot{x}_1=\begin{cases}1-\frac{1}{9}(x_1^2+x_2^2+x_3^2-4)^2, \quad x_1^2+x_2^2+x_3^2 \leqslant 4 \cr
			1, \quad x_1^2+x_2^2+x_3^2 > 4
		\end{cases}\cr
		\dot{x}_2=\begin{cases}
			\frac{x_2}{2}\big(\sin\big(\frac{\pi}{2}\big(x_1^2+x_2^2+x_3^2-3\big)\big)-1\big), \quad 2<x_1^2+x_2^2+x_3^2\leqslant 4\cr
			-x_2,\quad \quad x_1^2+x_2^2+x_3^2\leqslant 2\cr
			0, \quad x_1^2+x_2^2+x_3^2 > 4
		\end{cases}\cr
		\dot{x}_3=\begin{cases}
			\frac{x_3}{2}\big(\sin\big(\frac{\pi}{2}\big(x_1^2+x_2^2+x_3^2-3\big)\big)-1\big), \quad 2<x_1^2+x_2^2+x_3^2\leqslant 4\cr
			-x_3,\quad \quad x_1^2+x_2^2+x_3^2\leqslant 2\cr
			0, \quad x_1^2+x_2^2+x_3^2 > 4.
		\end{cases}
	\end{cases}$$ 
\begin{figure}[h!]\centerline{\includegraphics	[width=8 true cm, height=5.5 true cm]{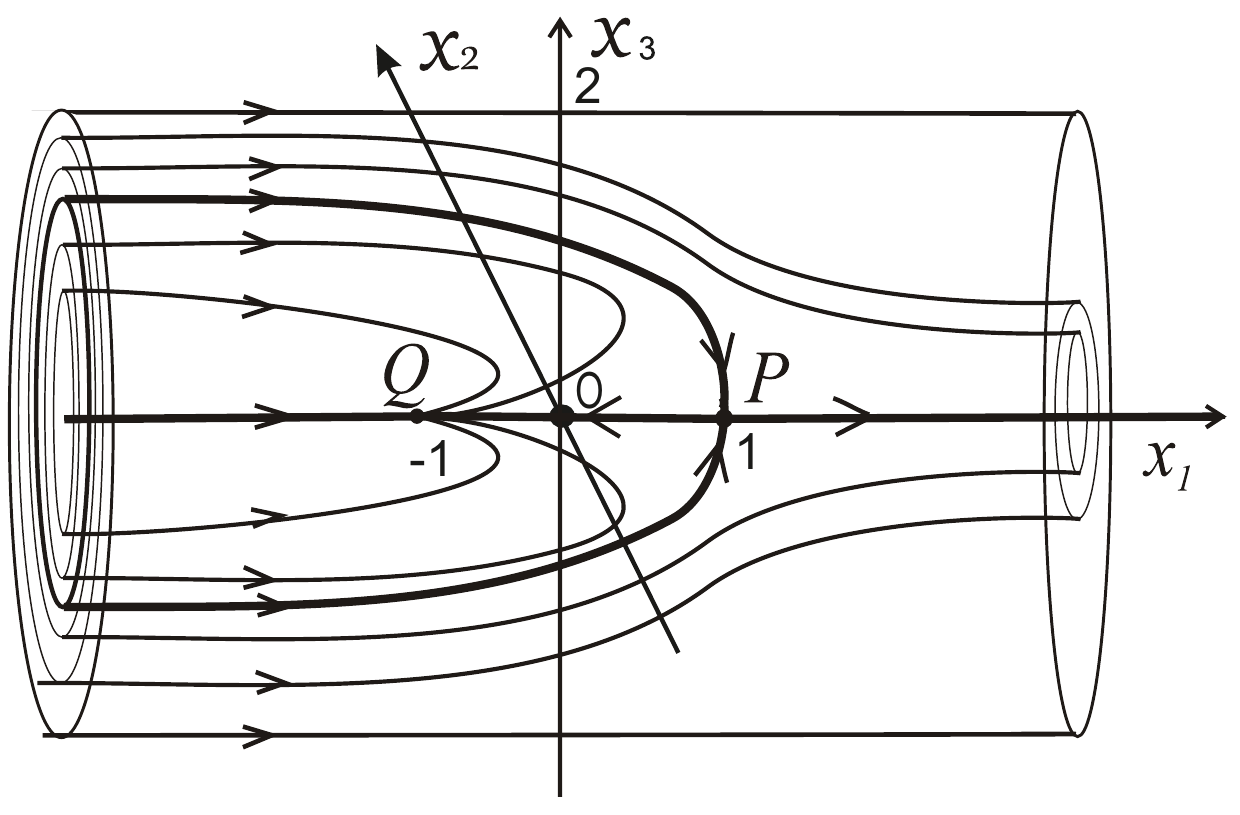}}\caption{\small Trajectories of the flow $\phi^t$}\label{cherry}	\end{figure}
	
	By construction the diffeomorphism $\phi=\phi^1$ has two fixed points: the saddle $P(1,0,0)$ and the sink $Q(-1,0,0)$ (Fig. \ref{cherry}), both being hyperbolic. One unstable separatrice of the saddle $P$ coincides with the open interval  $\left\{(x_1,x_2,x_3)\in\mathbb R^3: \,|x_1|<1,\,x_2=x_3=0\right\}$ in the basin of the sink $Q$ while the other coincides with the ray  $\left\{(x_1,x_2,x_3)\in\mathbb R^3:\,x_1>1,\,x_2=x_3=0\right\}$. 
		Notice that $\phi$ coincides with the diffeomorphism $g=g^1$ outside the ball $\{(x_1,x_2,x_3)\in C:x_1^2+x_2^2+x_3^2\leqslant 4\}$. 
	
Define the diffeomorphism $\bar f_L:\mathbb R^3\to\mathbb R^3$ so that $\bar{f}_{L}$ coincides with the homothety $h$ outside ${U({L})}$ and it coincides with ${\zeta}^{-1}\phi{\zeta}$ on ${U({L})}$. Then $\bar f_{L}$ has in ${U({L})}$ two fixed points: the sink ${\zeta}^{-1}(Q)$ and the saddle ${\zeta}^{-1}(P)$, both being hyperbolic. The unstable separatrice of the saddle  ${\zeta}^{-1}(P)$ lies in ${L}$ (Fig. \ref{af2}). 
\begin{figure}[h!]\centerline{\includegraphics		[width=6 true cm, height=6 true cm]{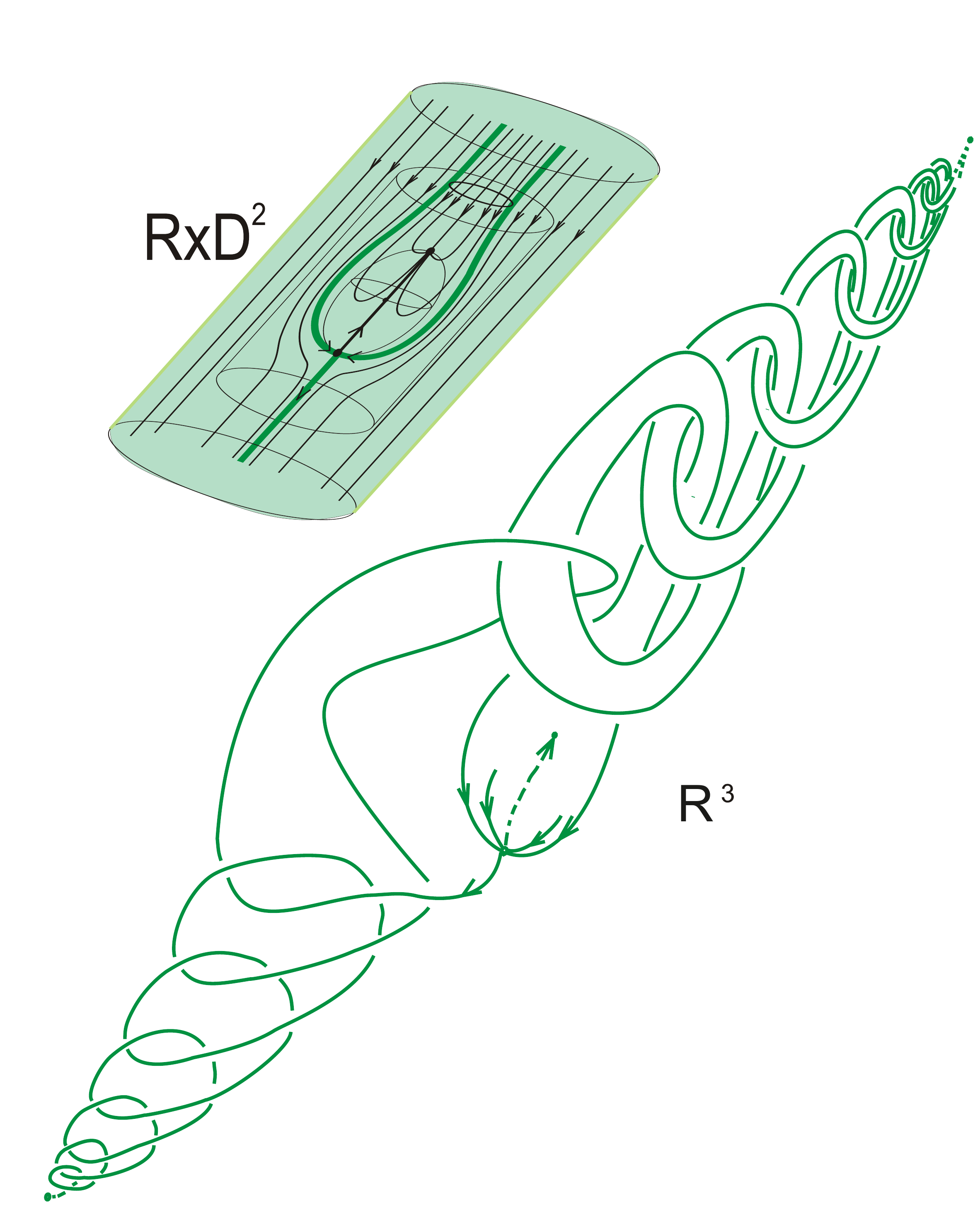}}\caption{\small The phase portrait of the diffeomorphism  $\bar f_L$}\label{af2}\end{figure}
	
	{Now project the dynamics onto the $3$-sphere. Denote by $N(0,0, 0, 1)$ the North Pole of the sphere $\mathbb S^3=\{(x_1,x_2,x_3,x_{4})\in\mathbb R^4:\,x_1^2+x_2^2+x_3^2+x_4^2=1\}$. For every point $(x_1,x_2,x_3,x_{4})\in(\mathbb{S}^3\setminus\{N\})$ there is the unique line in $\mathbb R^{4}$ passing through this point and the point $N$. This line intersects $\mathbb R^3\subset\mathbb R^4$ at exactly one point $\vartheta_+(x_1,x_2,x_3,x_{4})$ (Fig. \ref{stereo}). One can easily check that $$\vartheta_+(x_1,x_2,x_3,x_{4})=\left(\frac{x_1}{1-x_{4}},
		\frac{x_{2}}{1-x_{4}},\frac{x_3}{1-x_{4}}\right).$$ The diffeomorphism $\vartheta_+:\mathbb S^3\setminus\{N\}\to\mathbb R^3$ is called a {\em stereographic projection}.
\begin{figure}[h!]\centering{\includegraphics[width=8cm,height=4.5 cm]{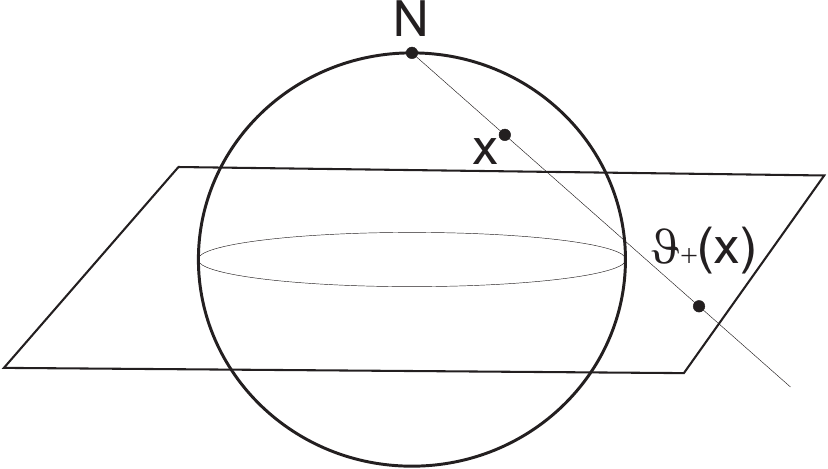}} \caption{\small The stereographic projection.}\label{stereo}	\end{figure}
	
	{By construction $\bar{f}_{L}$ coincides with the homothety $h$ in some neighborhood of the point $O$ and in some neighborhood of the infinity. Therefore, it induces on $\mathbb{S}^3$ the Morse-Smale diffeomorphism $$f_{{L}}(x)=\begin{cases}\vartheta_+^{-1}(\bar f_{L}(\vartheta_+(x))),~{x}\neq N;\cr N,~{x}=N\end{cases}.$$} It follows directly from the construction that the non-wandering set of $f_{{L}}$ consists of exactly four fixed hyperbolic points: two sinks $\omega=\vartheta_+^{-1}({\zeta}^{-1}(Q))$, $S$, one saddle $\sigma=\vartheta_+^{-1}({\zeta}^{-1}(P))$ and one source $N$.
	We say the constructed diffeomorphism to be {\em model} and it is of Pixton class.
	\begin{proposition}[Theorem 1, Theorem 2, Theorem 3 of \cite{BoGr}] $ $
		\begin{itemize}
			\item Any diffeomorphism $f\in\mathcal P$ is topologically conjugate to some model diffeomorphism $f_L$.
			\item Two model diffeomorphisms $f_L,f_{L'}$ are topologically conjugate if and only if their knots $L,L'$ are equivalent.
		\end{itemize} \label{Pidy}
	\end{proposition}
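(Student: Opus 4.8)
The plan is to isolate a complete topological invariant---the equivalence class of a Hopf knot---, to prove it is preserved by conjugacy, to prove it determines $f$ up to conjugacy, and finally to check that $f_L$ realizes the class of $L$; the first bullet will then follow by applying completeness to $f$ and $f_{L_f}$. First I would fix the dynamical skeleton shared by all $f\in\mathcal P$. A Morse-index count on $\mathbb S^3$ forces $N$ to have index $3$, the two sinks index $0$ and $\sigma$ index $1$, so $W^u_\sigma$ is one-dimensional with two separatrices $\ell_\omega,\ell_S$, each converging to a sink, while $W^s_\sigma$ is two-dimensional. I would check that $\overline{W^u_\sigma}=\{\omega\}\cup\ell_\omega\cup\{\sigma\}\cup\ell_S\cup\{S\}$ is a topological arc joining the two sinks through $\sigma$, that $\overline{W^s_\sigma}=W^s_\sigma\cup\{\sigma,N\}$ is a $2$-sphere through $N$ separating $\mathbb S^3$ into two balls, one per sink, and that $\mathbb S^3\setminus\{N\}$ is the basin of the attractor $\overline{W^u_\sigma}$. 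Each separatrix is smooth away from its endpoints and locally flat at $\sigma$, so the only possible wildness sits at the sink endpoints---this is the Fox--Artin phenomenon of Pixton's example.

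Next I would extract the invariant. By topological linearization at a sink its basin is homeomorphic to $\mathbb R^3$ and $f$ restricted to it is conjugate to $x\mapsto x/2$, so the orbit space $\hat V=(W^s_S\setminus\{S\})/f$ is homeomorphic to $\mathbb S^2\times\mathbb S^1$. The invariant separatrix $\ell_S$ accumulates on $S$ and meets each fundamental domain once, hence projects to a smoothly embedded circle $L_f$ generating $\pi_1(\mathbb S^2\times\mathbb S^1)\cong\mathbb Z$, i.e. a Hopf knot. A conjugacy $h$ with $hf=f'h$ carries sinks to sinks, $\sigma$ to $\sigma'$ and separatrices to separatrices, so it descends to a homeomorphism $\hat V\to\hat V'$ sending $L_f$ to $L_{f'}$; this yields the ``only if'' direction of the second bullet. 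One subtlety is that the recipe must select the separatrix canonically: attaching one Hopf knot to each sink, a conjugacy preserves the resulting unordered pair, and a structural lemma---that at most one separatrix is wildly embedded, its companion arc being locally flat at its sink---reduces the datum to a single non-trivial knot $L_f$. I would prove this lemma from the embedding of the separating sphere $\overline{W^s_\sigma}$ together with the source's basin.

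The substantial step is completeness. Given $\psi\colon\mathbb S^2\times\mathbb S^1\to\mathbb S^2\times\mathbb S^1$ realizing an equivalence $L_f\sim L_{f'}$, I would first lift $\psi$ through the covering $\mathbb S^2\times\mathbb R\to\mathbb S^2\times\mathbb S^1$; since $\psi$ preserves the generator of $\pi_1$, the lift commutes with the deck translation, hence is equivariant for the linearized contraction, and transporting it back produces a conjugacy between $f$ and $f'$ on the punctured basins $W^s_S\setminus\{S\}$ and $W^s_{S'}\setminus\{S'\}$ carrying $\ell_S$ to $\ell_{S'}$ and extending over the sink point. I would then propagate this conjugacy across a linearizing neighbourhood of the saddle using the local product structure of $W^s_\sigma$ and $W^u_\sigma$, onto the second sink's basin, where the companion separatrix is normalized to the standard arc, and finally over the source by extending the induced homeomorphism of a small invariant $2$-sphere by the Alexander trick and spreading it along the orbits of the expansion.

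The hard part will be exactly this gluing in the topological category. Because the separatrices are wildly embedded at the sinks, the pieces cannot be matched by smooth or piecewise-linear isotopies; I must arrange that the equivariant lift of $\psi$ agrees, along the two-dimensional manifold $W^s_\sigma$, with the normalized conjugacy near the other sink, and that the resulting boundary map on the source sphere extends over the source ball. It is here that the hypothesis $L_f\sim L_{f'}$ is used in full strength: the knot equivalence is the precise combinatorial datum that carries one wild arc onto the other, and compatibility is secured fundamental-domain by fundamental-domain. For realization I would finally compute the invariant of the model: by construction the knotted separatrix of $f_L$ is $\overline L=p^{-1}(L)$, and $p$ identifies the orbit space of $W^s_S\setminus\{S\}$ with $(\mathbb R^3\setminus O)/h=\mathbb S^2\times\mathbb S^1$ carrying $\overline L$ to $L$, so $L_{f_L}=L$. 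Hence every Hopf knot is realized, and applying completeness to $f$ and $f_{L_f}$ shows every $f\in\mathcal P$ is conjugate to a model---the first bullet---while the two implications for the invariant give the second.
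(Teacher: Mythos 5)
The paper offers no internal proof of Proposition~\ref{Pidy}: it is imported wholesale from Bonatti--Grines \cite{BoGr}, so your attempt must be measured against the cited proof. Your skeleton matches it faithfully: the index bookkeeping on $\mathbb S^3$ (Euler characteristic forces the saddle to have index $1$), the identification of the sink-basin orbit space $(W^s_S\setminus S)/f$ with $\mathbb S^2\times\mathbb S^1$, the definition of $L_f$ as the projected separatrix, the observation that an equivariant lift of a knot equivalence $\psi$ through the $\mathbb Z$-cover \emph{is} a conjugacy on the punctured basins (the deck transformation being $f$ itself), and the computation $L_{f_L}=L$ for the models, which is exactly how the paper's Section~\ref{Pico} sets things up. Even your structural lemma (at most one wild separatrix, equivalently at most one nontrivial separatrix knot) is true and provable roughly along your hint: in the orbit space of the source basin the punctured $W^s_\sigma$ projects to a torus splitting $\mathbb S^2\times\mathbb S^1$ into the two separatrix-knot complements, and since $\pi_1(\mathbb S^2\times\mathbb S^1)\cong\mathbb Z$ contains no $\mathbb Z\oplus\mathbb Z$, that torus compresses on one side, forcing one of the two knots to be trivial. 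This lemma is what makes the single knot $L_f$ a well-defined invariant and what makes the first bullet reachable at all; in your write-up it is asserted with a one-line hint rather than proved.

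The genuine gap is at the step you yourself flag as ``the hard part''. Once you fix a lift of $\psi$, the conjugacy $h_S$ on the open ball $W^s_S$ is completely determined; ``propagating it across a linearizing neighbourhood of the saddle'' is not an available move, because there is no remaining freedom: either $h_S$ extends continuously to $\overline{W^s_S}=W^s_S\cup\overline{W^s_\sigma}$ or it does not, and for an arbitrary equivalence $\psi$ it generally does not. The repair must happen \emph{before} lifting: one has to adjust $\psi$ within its equivalence class so that it respects the trace in the orbit space of the data accumulating on $\overline{W^s_\sigma}$ --- a tubular neighbourhood of $L_f$ and the associated torus --- and then assemble the conjugacy from compatible maps of fundamental domains simultaneously over the two sink basins, the saddle neighbourhood and the source, where the tame separatrix (normalized via the lemma) and an Alexander-trick extension at $N$ close the construction. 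Your phrase that ``compatibility is secured fundamental-domain by fundamental-domain'' names the right mechanism but supplies no argument for why the adjusted $\psi$ exists; this compatibility is precisely the technical content of \cite{BoGr}, so as it stands the proposal is a correct roadmap of the cited proof with its crux deferred rather than a complete proof.
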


\begin{remark} Since $\rho_{_f}$ is a topological invariant, it suffices to prove Theorem~\ref{exact} only for the model diffeomorphisms $f_L\in G$.
\end{remark}
	
\section{Genus of Hopf knot}\label{appr}
In this section we introduce the notion of genus for a Hopf knot and use it to estimate the number of critical points of the quasi-energy function of the Pixton diffeomorphism defined by this knot.

Let $L$ be a Hopf knot. We say a closed orientable surface $\Sigma\subset\mathbb S^2\times\mathbb S^1$ to be a {\em secant surface of the knot $L$} if it intersects $L$ at a unique point. The minimally possible genus $g_{_L}$ of the secant surface is called the {\it genus of the knot} $L$. The secant surface of $L$ of genus $g_{_L}$ is said to be {\it minimal}.

Since the secant surface $\Sigma$ in $\mathbb S^2\times\mathbb S^1$ cuts a Hopf knot transversely at exactly one point the surface $\Sigma$ is homologous to the fiber $\mathbb S^2\times\{x\}$. 
Furthermore, its lift by $p$ on the universal cover $\mathbb R^3\setminus O$ consists of connected components parametrized by $\mathbb Z$, so that $p$ is a diffeomorphism from such a  component $\bar \Sigma$ onto
$\Sigma$. This implies that $\bar \Sigma$ bounds a trapping neighborhood $Q_\Sigma$ of $O$, that is $h(Q_\Sigma)\subset int\,Q_\Sigma$.

Let $\bar L=p^{-1}(L)$ be the  cover of $L$ in $\mathbb R^3\setminus O$ and $\bar y=Q_\Sigma\cap\bar L$. Recall that the surface $\bar\Sigma\setminus\bar y$ is called {\em incompressible in $\mathbb R^3\setminus(O\cup\bar L)$} if the fact that a simple closed curve $c\subset int~(\bar\Sigma\setminus\bar y)$ is contractible on $\bar\Sigma\setminus\bar y$ means that it bounds a smoothly embedded 2-disk $D\subset int~(\mathbb R^3\setminus(O\cup\bar L))$ such that $D\cap (\bar\Sigma\setminus\bar y)=\partial D=c$.

\begin{lemma}\label{compr} If  $\Sigma$ is a minimal secant surface of the knot $L$ then the surface $\bar\Sigma\setminus\bar y$ is incompressible in $\mathbb R^3\setminus(O\cup\bar L)$.
\end{lemma}
\begin{proof} Let $\Sigma$ be a minimal secant surface of $L$ and let $\bar y$ be the unique point of the intersection $\bar L\cap \bar\Sigma$. Assume the opposite: the surface $\bar\Sigma\setminus\bar y$ is compressible in $\mathbb R^3\setminus(O\cup\bar L)$. Then there is a non-contractible simple closed curve $c\subset int~(\bar\Sigma\setminus\bar y)$ and there is the smoothly embedded 2-disk $D\subset int~(\mathbb R^3\setminus(O\cup\bar L))$ such that $D\cap (\bar\Sigma\setminus\bar y)=\partial D=c$ (see, for example, \cite{Neu}).
Then we have two possibilities: 
\begin{equation}\label{(1)}
(int\,D)\cap\left(\bigcup\limits_{k\in\mathbb Z}h^k(\bar\Sigma)\right)=\emptyset,
\end{equation}  
\begin{equation}\label{(2)}
(int\,D)\cap\left(\bigcup\limits_{k\in\mathbb Z}h^k(\bar\Sigma)\right)\neq \emptyset. 
\end{equation} 
In case (1) two subcases are possible: (1a) 
$D\subset Q_\Sigma$, (1b) $D\subset (\mathbb R^3\setminus int\,Q_\Sigma)$. For case 1a) let $N(D)\subset Q_\Sigma$ be a tubular neighborhood of the disk $D$. Then exactly one connected component of the set $Q_\Sigma\setminus int\,N(D)$ intersects $\bar L$. According to (\ref{(1)}) this component is a trapping neighborhood of $O$ and its boundary intersects $\bar L$ at a unique point. The projection of this boundary into $\mathbb S^2\times\mathbb S^1$ is, therefore, the secant surface of $L$ of genus less than $g_L$. This contradicts the fact that the surface $\Sigma$ is minimal. In case 1b) let $N(D)\subset (\mathbb R^3\setminus int\,Q_\Sigma)$ be a tubular neighborhood of $D$. Then due to (\ref{(1)}) the set $Q_\Sigma\cup N(D)$ is a trapping neighborhood of $O$ and its boundary intersects $\bar L$ at a unique point. The projection of this boundary into $\mathbb S^2\times\mathbb S^1$ is, therefore, the secant surface of $L$ of genus less than $g_L$ and we have the same contradiction.

In case (2) without loss of generality assume the intersection $int\,D\cap(\bigcup\limits_{k\in\mathbb Z}h^k(\bar\Sigma))$ to be transversal and denote it by $\Gamma$. Let $\gamma$ be a curve from $\Gamma$. We say the curve $\gamma$ to be {\it innermost} if it is the boundary of the disk $D_\gamma\subset D$ such that $int\,D_\gamma$ contains no curves of $\Gamma$. Consider this innermost curve $\gamma\subset f^k(\Sigma)$. There are two subcases: a) $\gamma$ is essential on $f^k(\Sigma)$ and b) $\gamma$ is contractible on  $f^k(\Sigma)$. In case a) the arguments of the case (1) apply for the body $f^k(Q_\Sigma)$ and the disk $D_\gamma$ and we get the contradiction to the minimality of the surface $\Sigma$. In case b) denote by $d_\gamma\subset f^k(\Sigma)$ the 2-disk bounded by $\gamma$ and denote by $B_\gamma\subset(\mathbb R^3\setminus O)$ the 3-ball bounded by the 2-sphere $D_\gamma\cup d_\gamma$. Consider: b1) $B_\gamma\subset f^k(Q_\Sigma)$ and b2) $B_\gamma\subset (\mathbb R^3\setminus int\,f^k(Q_\Sigma))$. For b1) let $N(B_\gamma)\subset f^k(Q_\Sigma)$ be a tubular neighborhood of $B_\gamma$. { Then the set $Q_\Sigma\setminus int\, N(B_\gamma)$ is a trapping neighborhood of $O$ because the curve $\gamma$ lies in its interior and the boundary of $Q_\Sigma\setminus int\, N(B_\gamma)$ intersects $\bar L$ at a unique point.} The projection of this boundary into $\mathbb S^2\times\mathbb S^1$ is, therefore, the secant surface of the knot $L$ of genus $g_L$ for which the number of connected components of the set $\Gamma$  is less. We get the same result for b2) for the set $Q_\Sigma\cup N(B_\gamma)$. Thus, iterating the process we come either to the case a) or to the case (1) and get a contradiction.
\end{proof}

Now, let us consider a model diffeomorphism $f_L\in G$.  Denote by $\ell$ the unstable separatrice of the saddle $\sigma$ lying in the basin of the sink $S$. Since the diffeomorphism $f_L$ is conjugate with the homothety $h$ in some neighborhood $V_S$ of $S$ by means of the stereographic projection $\vartheta_+$ then a natural projection $$p_S:W^s_S\setminus S\to\mathbb S^2\times\mathbb S^1$$ is correctly defined by the formula $$p_S(w)=p(\vartheta_+(f^{k_w}(w))),\,w\in(W^s_S\setminus S),\,f^{k_w}(w)\in V_S.$$ Moreover, $p_S(\ell)=L$.

\begin{lemma} Any Morse-Lyapunov function $\varphi:\mathbb S^3\to\mathbb R$ of the diffeomorphism $f_L$ has a connected component $\bar\Sigma_1\subset (W^s_S\setminus S)$ of its level set such that $p_S(\bar\Sigma_1)$ is a secant surface for $L$. \label{genus1}
\end{lemma}
\begin{proof} Consider an arbitrary Morse-Lyapunov function $\varphi:\mathbb S^3\to\mathbb R$ of the diffeomorphism $f_L$. To be definite let $\varphi(S)=0$, $\varphi(\sigma)=1$ and $\varphi(N)=3$. From the definition of the Morse-Lyapunov function it follows that $\varphi|_{\ell}$ monotonically decreases in some neighborhood of the saddle $\sigma$. Therefore, there is $\varepsilon_1\in(0,1)$ such that the interval  $(1-\varepsilon_1,1)$ contains no critical values of $\varphi$ and the connected component $\bar\Sigma_{1}$ of the level set $\varphi^{-1}(1-\varepsilon_1)$ intersects the separatrice $\ell$ at the unique point. Denote this point by $w_1$. 

Let $\bar Q_1$ be the connected component of the set $\varphi^{-1}([0,1-\varepsilon_1])$ which contains the segment $[w_1,S]$ of the closure of the separatrice $\ell$. Since $\varphi$ decreases along the trajectories of $f$, the values of $\varphi$ on $W^s_\sigma$ are greater than 1. Therefore, the manifold $\bar Q_1$ lies in the manifold $W^s_S$. 
\end{proof}

\begin{lemma} Let $\varphi$ and $\bar\Sigma_1$ be the function and the surface from Lemma \ref{genus1}. Let $g$ be the genus of  $\bar\Sigma_1$. Then $\varphi$ has at least $4+2g$ critical points. \label{genus2}
\end{lemma}
\begin{proof} Let the function $\varphi|_{\bar Q_1}$ have $k_q,\,q\in\{0,\dots,3\}$ critical points of index $q$. Due to \cite[Theorem 6.1]{Fomad} on the manifold $\bar Q_1$ there exists a self-indexing Morse function $\psi$ (the value of the function in a critical point equals the index of this point) which has $k_q$ critical points of index $q$ and which is constant on $\partial{\bar Q_1}$. Thus, the manifold $\bar Q_1$ is the surface $\tilde Q_1$ of genus $g_1=1+k_1-k_0$ with attached handles of indexes 2 and 3. Then the genus of any surface of the set $\partial\bar Q_1$ cannot be greater than $g_1$, that is $$g_1\geqslant g.$$
On the other hand, the number of critical points of  $\varphi|_{\bar Q_1}$ is not less than $k_0+k_1$. If  $k_0\geqslant 1$ and $g_1=1+k_1-k_0$ then one gets $k_0+k_1=g_1+2k_0-1\geqslant g_1+1$. Thus, $\varphi|_{\bar Q_1}$ has at least $g+1$ critical points.

Analogously, there is $\varepsilon_2\in(0,1)$ for which the interval $(1,1+\varepsilon_2)$ contains no critical points of  $\varphi$ and  the connected component $\bar Q_{2}$ of the level set $\varphi^{-1}([0,1+\varepsilon_2)]$ contains $cl(W^u_\sigma)$ in its interior while the intersection $\bar Q_{2}$ with $W^s_\sigma$ is the unique 2-disk. Due to construction the function $\varphi|_{\bar Q_2}$ has at least $g_1+3$ critical points and genus of the connected components of $\partial\bar Q_2$ is less or equals $g_1$.  
Denote by $\bar\Sigma_2$ the connected component of $\partial\bar Q_2$ which intersects $W^s_\sigma$ and denote by $g_2$ its genus. The surface $\bar\Sigma_2$ divides the manifold $W^u_N$ into two parts, one of which $Q_2$ is a trapping neighborhood of $N$ with respect to $f_L^{-1}$. Arguing as above one comes to conclusion that the number of critical points of  $\varphi|_{Q_2}$ is at least  $g_2+1$. Therefore, the total number of critical points of $\varphi$ is greater or equal to $$g_1+3+g_2+1\geqslant 4+2g_1\geqslant 4+2g.$$
\end{proof}
From Lemma \ref{genus1} and Lemma \ref{genus2} we get the following fact.
\begin{corollary}\label{geny} For any model diffeomorphism $f_L\in\mathcal P$ the following estimation holds
\begin{equation}\label{gen}
\rho_{_{f_L}}\geqslant 4+2g_{_{L}}.
\end{equation}
\end{corollary}

\section{The generalized Mazur knot $L_{n}$}
In this section we show that the genus $g_{_{L_{n}}}$ of a generalized Mazur knot equals ${n}$. At first we give a detailed description of construction of $L_n$.
\subsection{Construction of the generalized Mazur knot  $L_n$}
Recall that for ${\bf x}=(x_1,x_2,x_3)\in\mathbb R^3$ and $||{\bf x}||=\sqrt{x_1^2+x_2^2+x_3^2}$ we define the homothety $h:\mathbb R^3\to\mathbb R^3$ by the formula $$h({\bf x})=\frac{{\bf x}}{2}.$$ For $O(0,0,0)\in\mathbb R^3$ we define the natural projection $p:\mathbb R^3\setminus O\to\mathbb S^{2}\times\mathbb S^1$ by the formula 
	$$p({\bf x})=\left(\frac{{\bf x}}{||{\bf x}||}, 	\log_2(||{\bf x}||)\pmod 1\right).$$
 Consider the set  $$K_0=\left\{{\bf x}\in\mathbb R^3:\frac12\leq ||{\bf x}||< 1\right\}$$ whose closure $K=cl\,K_0$ is a fundamental domain of $h|_{\mathbb R^3\setminus O}$ with the boundary $$\mathbb S^2=\left\{{\bf x}\in\mathbb R^3:\,||{\bf x}||= 1\right\},\,h(\mathbb S^2).$$ 
Pick on the circle $$\mathbb S^1=\left\{(x_1,x_2,x_3)\in \mathbb R^3: x_1^2+x_2^2=1,\,x_3=0\right\}$$ pairwise distinct points $\alpha_1,\dots,\alpha_{2n+1}$ numbered in counter-clockwise order (Fig.~\ref{wil}). Let $a_i,\,i\in\{1,\dots,2n\}$ be the arc of the circle $\mathbb S^1$ bounded by $\alpha_i$, $\alpha_{i+1}$ whose interior contains no points of  $\{\alpha_1,\dots,\alpha_{2n+1}\}$. Let  $B,\,A_{i}\subset cl\,K_0,\,i\in\{1,\dots,2n\}$ be pairwise disjoint smooth arcs such that:
\begin{enumerate}
\item[1)] the boundary points of $B$ are $\alpha_{2n+1}$, $h(\alpha_1)$;  the boundary points of $A_{2j-1}$ are $\alpha_{2j-1}$, $\alpha_{2j}$; the boundary points of $A_{2j}$ are $h(\alpha_{2j})$, $h(\alpha_{2j+1})$ for $j\in\{1,\dots,n\}$ and the arc $cl (h(A_1)\cup A_2\cup\dots\cup h(A_{2n-1})\cup A_{2n}\cup B)$ is smooth; 
\item[2)] the closed curves $c_{2j-1}=cl(a_{2j-1}\cup A_{2j-1}),\,c_{2j}=cl(h(a_{2j})\cup A_{2j})$ bound in $cl\,K_0$ the 2-disks $d_{2j-1},\,d_{2j}$ whose transversal intersection is  the arc $l_j$ with the boundary points $b_{2j-1}=d_{2j-1}\cap A_{2j}$, $b_{2j}=d_{2j}\cap A_{2j-1}$ and $cl\,K_0\setminus(B\cup\bigcup\limits_{i=1}^{2n}d_i)$ is homeomorphic to $D_n\times[0,1]$, where $D_n$ is the open 2-disc with $n$ holes.
\end{enumerate}
\begin{figure}[h]\center{\includegraphics[width=
0.7\linewidth]{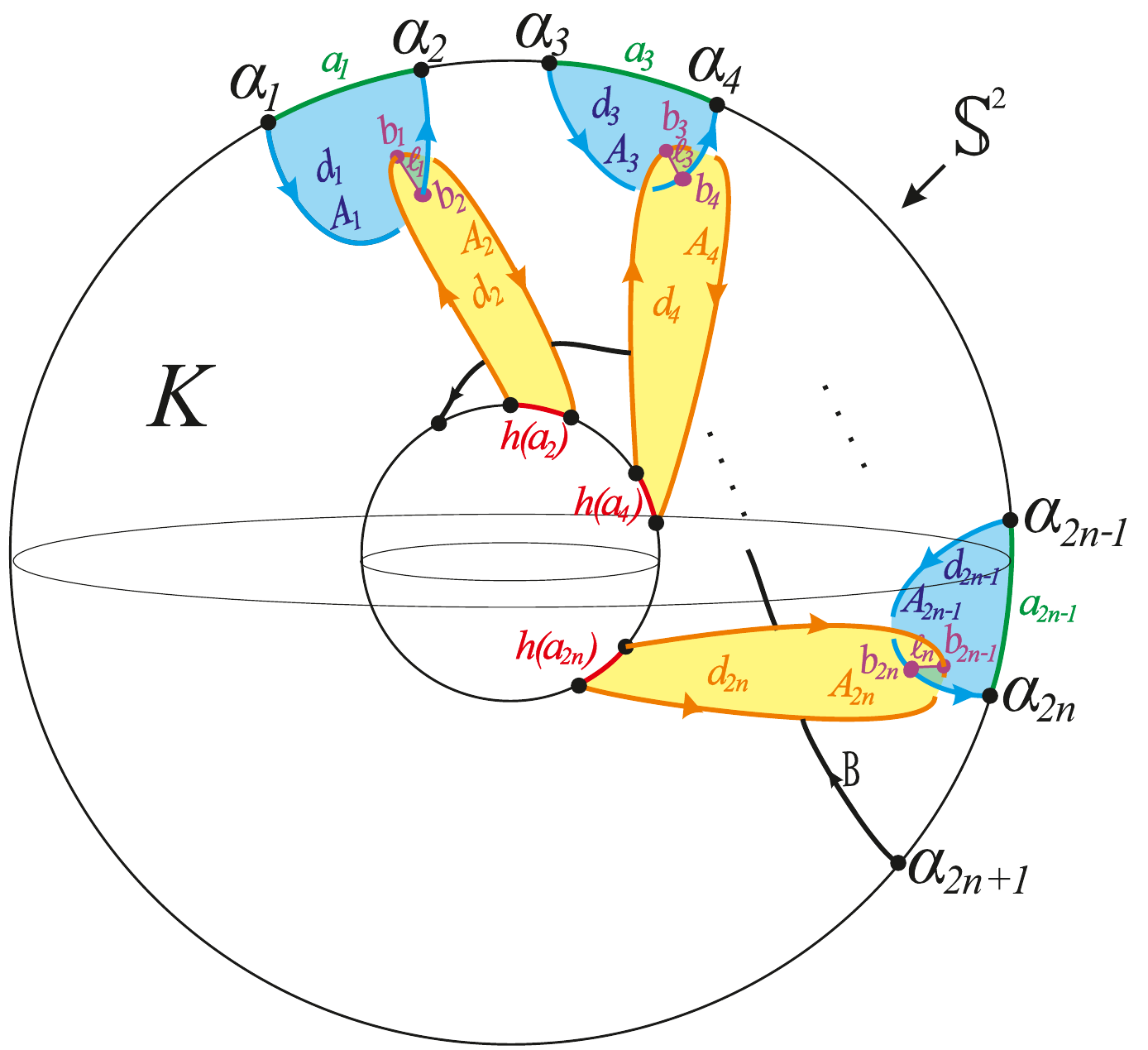}}\caption{\small Construction of the knot $L_{n}$}\label{wil}
\end{figure}
Let $$\bar L_n=\bigcup\limits_{k\in\mathbb Z}h^k(B\cup A_1\cup\dots\cup A_{2n}),\,L_n = p (\bar L_n).$$ 

\subsection{The genus of the knot $L_n$}
\begin{lemma}\label{rod1} The  knot $L_n$ admits a minimal secant $\Sigma$ such that $\bar\Sigma\subset K_0$ and $\bar L_n\cap\bar\Sigma=h(\alpha_1)$.
\end{lemma}
\begin{proof} Let $\Sigma_0$ be some minimal secant surface of $L_n$. Then there exists the connected component $\bar\Sigma_0$ of $p^{-1}(\Sigma_0)$ such that it intersects the curve $\bar L_n$ at the point $\bar y_0$ situated on $\bar L_n$ between $\alpha_1,h(\alpha_1)$ and that bounds a trapping neighborhood  $Q_{\Sigma_0}$ of $O$. Without loss of generality let $\bar y_0=h(\alpha_1)$ (otherwise the desired surface is constructed by removing the tubular neighborhood of the arc $[\bar y_0, h(\alpha_1)]\subset\bar L_n$ from $Q_{\Sigma_0}$).

Denote by $k_+,k_-\geqslant 0$ the maximal integers for which $f^{k}(K_0)\cap\bar\Sigma_0\neq\emptyset,\,f^{-k}(K_0)\cap\bar\Sigma_0\neq\emptyset,\,k\geqslant 0$, respectively. If $k_+=k_-=0$ then $\bar\Sigma_0$ is the desired surface. Otherwise we show the way to decrease by 1 the number
$k_+>0$ (for $k_-$ the arguments are the same) using isotopy of the secant surface.

Notice that $\bar\Sigma_0\cap f^{k_+}(c_{2j-1})=\emptyset,\,j\in\{1,\dots,n\}$. Without loss of generality let the intersection $\Gamma=\bigcup\limits_{j=1}^nf^{k_+}(d_{2j-1})\cap\bar\Sigma_0$ be transversal. Let $\gamma$ be a curve from  $\Gamma$. Then $\gamma$ bounds the unique disk $D_\gamma\subset f^{k_+}(d_{2j-1})$.
There are two possibilities: 1) $f^{k_+}(b_{2j-1})\notin D_\gamma$, 2) $f^{k_+}(b_{2j-1})\in D_\gamma$. In case 1) we say the curve $\gamma$ to be {\em innermost} if it bounds the disk $D_\gamma\subset f^{k_+}(d_{2j-1})$ such that $int\,D_\gamma$ contains no curves of  $\Gamma$. Consider this innermost curve $\gamma$. Due to Lemma \ref{compr} the surface $\bar\Sigma_0\setminus\bar y_0$ is incompressible in $\mathbb R^3\setminus(O\cup\bar L_n)$ and, therefore, there exists the disk  $d_\gamma\subset(\bar\Sigma_0\setminus\bar y_0)$ bounded by $\gamma$. Denote by $B_\gamma\subset(\mathbb R^3\setminus (O\cup\bar L_n))$ the 3-ball bounded by the 2-sphere $D_\gamma\cup d_\gamma$. Consider two subcases: 1a) $B_\gamma\subset Q_{\Sigma_0}$ and 1b) $B_\gamma\subset (\mathbb R^3\setminus int\,Q_{\Sigma_0})$. 

In case 1a) let $N(B_\gamma)\subset Q_{\Sigma_0}$ be a tubular neighborhood of the ball $B_\gamma$. Then the set $Q_\Sigma\setminus int\, N(B_\gamma)$ is a trapping neighborhood of $O$ because the curve $\gamma$ {lies in its interior} and its boundary intersects $\bar L_n$ at a unique point. The projection of this boundary to $\mathbb S^2\times\mathbb S^1$ is, therefore, a secant surface of $L_n$ of the same genus as $\Sigma_0$. For it the number of the connected components of the set $\Gamma$ is less. One gets the same result in  case 1b) for the set $Q_{\Sigma_0}\cup N(B_\gamma)$. 

If we continue this process then we get the secant surface of $L_n$ of the same genus as $\Sigma_0$ and for which the set $\Gamma$ contains no curves of type 1). Denote the resulting surface again by $\Sigma_0$. Now the set $\Gamma$ consists only of the curves $\gamma$ bounding the disk $D_\gamma\subset f^{k_+}(d_{2j-1})$ which contains the point $f^{k_+}(b_{2j-1})$. Since $f^{k_+}(b_{2j-1}\sqcup c_{2j-1})\subset (\mathbb R^3\setminus Q_{\Sigma_0})$, the number of these curves on the disk $f^{k_+}(d_{2j-1})$ is even. Since the surface  $\bar\Sigma_0\setminus\bar y_0$ is incompressible in $\mathbb R^3\setminus(O\cup\bar L_n)$, all these curves are pairwise homotopic on $\bar\Sigma_0\setminus\bar y_0$ and, therefore, they lie in the annulus $\kappa\subset(\bar\Sigma_0\setminus\bar y_0)$ bounded by the pair of these curves $\gamma_1,\gamma_2$. Denote by $\tilde\kappa\subset f^{k_+}(d_{2j-1})$ the annulus bounded by the same curves on the disk $f^{k_+}(d_{2j-1})$. Let $\tilde\Sigma_0=\bar\Sigma_0\setminus\kappa\cup\tilde\kappa$. Due to construction the surface $\tilde\Sigma_0$ is of the same genus as the surface $\bar\Sigma_0$ and it bounds a trapping neighborhood of $O$. Having removed a tubular neighborhood of the annulus $\tilde\kappa$ from this body we get a trapping neighborhood of $O$ whose boundary does not intersect the disk  $f^{k_+}(d_{2j-1})$ and whose projection to $\mathbb S^2\times\mathbb S^1$ is the secant surface of the knot $L_n$ of the same genus as $\Sigma_0$.     

If we continue this process then we get a secant surface of $L_n$ of the same genus as $\Sigma_0$ and for which the set $\Gamma$ is empty. Denote this surface again by $\Sigma_0$. Without loss of generality let the intersections of the surface $\bar\Sigma_0$ with the spheres $f^k(\mathbb S^2)$ be transversal. Denote by $\mathcal F$ the set of the connected components of the intersection $f^{k_+}(K_0)\cap\bar\Sigma_0$. Now we show the way to reduce by 1 the number of the components in $\mathcal F$ using isotopy of the secant surface.

Denote by $Q$ the set obtained by removal from the annulus $f^{k_+}(K_0)$ of the tubular neighborhoods of the disks $f^{k_+}(d_{2j-1})$ as well as the tubular neighborhoods of the curves $f^{k_+}(A_{2j}),\,j\in\{1,\dots,n\}$, avoiding $\bar\Sigma_0$. Then $Q$ is homeomorphic to the direct product of the 2-sphere with  $2n+1$ deleted points and the segment. 
Since $Q\cap\bar\Sigma_0=f^{k_+}(K_0)\cap\bar\Sigma_0$ and since $\bar\Sigma_0\setminus\bar y_0$ is incompressible in $\mathbb R^3\setminus(O\cup\bar L_n)$, each connected component of  $F\in\mathcal F$ is incompressible in $Q$. Due to \cite[Corollary 3.2]{Wald} there exists a surface $\tilde F\subset f^{k_+-1}(\mathbb S^2)$ diffeomorphic to $F$ for which $\partial F=\partial{\tilde F}$ and the surface $F\cup\tilde F$ bounds in $Q$ the body $\Delta$ diffeomorphic to the direct product $F\times[0,1]$. Then we replace the part $F$ of $\bar\Sigma_0$ with $\tilde F$. If we continue the process we get the desired secant surface  $\Sigma\subset K_0$. 
\end{proof}

\begin{lemma}\label{rod} The genus $g_{_{L_n}}$ of the knot $L_n$ equals $n$.
\end{lemma}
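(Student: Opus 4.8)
The plan is to establish the equality $g_{_{L_n}}=n$ by proving the two inequalities $g_{_{L_n}}\le n$ and $g_{_{L_n}}\ge n$ separately. The upper bound is constructive and the lower bound is the genuinely hard half.

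\medskip

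For the upper bound $g_{_{L_n}}\le n$ I would exhibit an explicit secant surface of genus $n$. The model to deform is the round sphere $\mathbb S^2=\{x_1^2+x_2^2+x_3^2=1\}$, which bounds the $h$-compressible unit ball but meets the cover $\bar L_n$ in more than one point, since each clasp forces the knot arc to cross $\mathbb S^2$ near the points $\alpha_i$ and return. For each $j\in\{1,\dots,n\}$ the clasp arc $l_j=d_{2j-1}\cap d_{2j}$ records exactly one such superfluous pair of intersections, so I would attach one tube (a single $1$-handle) guided along the clasped pair $d_{2j-1},d_{2j}$, routed to miss $\bar L_n$, that cancels this pair. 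After attaching these $n$ tubes the resulting surface $\bar\Sigma$ meets $\bar L_n$ in the single point $\bar y$, bounds a body $Q_\Sigma$ with $h(Q_\Sigma)\subset\mathrm{int}\,Q_\Sigma$, and has genus $n$; then $\Sigma=p(\bar\Sigma)$ is a secant surface of genus $n$, giving $g_{_{L_n}}\le n$.

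\medskip

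For the lower bound $g_{_{L_n}}\ge n$, let $\Sigma$ be a minimal secant surface with lift $\bar\Sigma$ and let $\bar y$ be the unique point of $\bar L_n\cap\bar\Sigma$. By Lemma \ref{compr} the punctured surface $\bar\Sigma\setminus\bar y$ is incompressible in $\mathbb R^3\setminus(O\cup\bar L_n)$. I would put $\bar\Sigma$ in general position with respect to the family of clasp disks $\{d_i\}$ and all their $h$-translates, and then eliminate inessential intersection circles: an innermost circle of $\bar\Sigma\cap h^k(d_i)$ that bounds a disk on $h^k(d_i)$ can be removed using the incompressibility of $\bar\Sigma\setminus\bar y$ together with the $h$-compressibility of $Q_\Sigma$, by exactly the innermost-disk and outermost-arc reductions carried out in the proof of Lemma \ref{compr}. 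After this cleanup every remaining intersection of $\bar\Sigma$ with the clasp structure is essential, and all counts are taken $h$-equivariantly inside a single fundamental domain $V=Q_\Sigma\setminus\mathrm{int}\,h(Q_\Sigma)$, a compact cobordism between $\bar\Sigma$ and $h(\bar\Sigma)$ which $\bar L_n$ meets in one arc.

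\medskip

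The core of the argument is to convert the $n$ clasps into $n$ units of genus. I would show that the clasped pairs $(c_{2j-1},c_{2j})$, $j\in\{1,\dots,n\}$, impose $n$ independent genus-carrying constraints on $\bar\Sigma$: each clasp $l_j$ forces a nonseparating handle, because a surface with fewer than $n$ handles could be compressed or isotoped so as to unclasp one of the pairs, contradicting either the essentiality established above or the minimality of $\Sigma$. Tracking how the $n$ clasps of the Mazur pattern sit inside $V$ relative to $\bar\Sigma$, one obtains $g_{_{L_n}}\ge n$, and together with the upper bound this yields $g_{_{L_n}}=n$. The main obstacle is precisely this last step: making rigorous that the $n$ clasps contribute \emph{independently}, i.e. ruling out that one handle of $\bar\Sigma$ could simultaneously account for two distinct clasps. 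This is where the incompressibility from Lemma \ref{compr} is indispensable, and it must be combined with a careful, $h$-equivariant innermost-disk/outermost-arc analysis so that the genus count is honestly performed in one fundamental domain rather than on the non-compact cover.
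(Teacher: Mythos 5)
Your upper bound is fine and is essentially the paper's (the paper simply exhibits a genus-$n$ secant surface in a figure; your one-tube-per-clasp description is that construction). The lower bound, however, contains a genuine gap, and you have located it yourself: the assertion that ``each clasp forces a nonseparating handle and the $n$ clasps contribute independently'' is exactly what must be proved, and nothing in your outline proves it. Saying that a surface of smaller genus ``could be compressed or isotoped so as to unclasp one of the pairs'' is not an argument: unclasping is an operation on the knot, not on the surface, and minimality of $\Sigma$ only forbids genus-reducing compressions of $\bar\Sigma$, while Lemma \ref{compr} only rules them out --- neither says anything about isotoping $\bar L_n$.

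The paper closes this gap with a concrete $\pi_1$ computation, reached by a normalization different from yours. Instead of counting in $V=Q_\Sigma\setminus int\,h(Q_\Sigma)$, it first isotopes the minimal surface entirely into the fixed annulus $K$ between $\mathbb S^2$ and $h(\mathbb S^2)$, with $\bar L_n\cap\bar\Sigma=h(\alpha_1)$; besides the innermost-circle cleanup you describe, this step needs \cite[Corollary 3.2]{Wald} applied to the region $Q$ (the annulus $f^{k_+}(K)$ with neighborhoods of the clasp disks removed, a product of a $(2n+1)$-punctured sphere with an interval) in order to push incompressible pieces of $\bar\Sigma$ down one fundamental domain at a time. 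Once $\bar\Sigma\subset K$, the group $\pi_1(K\setminus\bar L_n)$ is free of rank $2n$, with generators $\gamma_i$ carried by the punctured disks $d_i\setminus b_i$. Since $b_{2j-1}\in int\,Q_\Sigma$ while $c_{2j-1}\cap Q_\Sigma=\emptyset$, the component of $d_i\cap Q_\Sigma$ containing $b_i$ is a planar surface whose outer boundary $\tilde\gamma_i\subset\bar\Sigma\setminus h(\alpha_1)$ is homotopic to $\gamma_i$ in $d_i\setminus b_i$. Incompressibility of $\bar\Sigma\setminus h(\alpha_1)$ (Lemma \ref{compr}) forces these $2n$ curves to be essential and pairwise non-homotopic on the once-punctured surface --- equivalently, their classes hit the $2n$ independent generators of $H_1(K\setminus\bar L_n)\cong\mathbb Z^{2n}$, so the rank-$2g$ group $H_1(\bar\Sigma\setminus h(\alpha_1))$ must surject onto a rank-$2n$ subgroup --- whence $2g\geqslant 2n$. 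This is the ``independence of the clasps'' your proposal lacks: it is the independence of $2n$ free generators, detected on the surface through $\pi_1$-injectivity. Without this device (or an equivalent homological one) your argument does not close.
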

\begin{proof} Since there is a secant surface of $L_n$ of genus $n$  (Fig.~\ref{mnt}), we have $g_{_{L_n}}\leqslant n$. Now we show that $g_{_{L_n}}\geqslant n$. 
\begin{figure}[h]\center{\includegraphics[width=
0.7\linewidth]{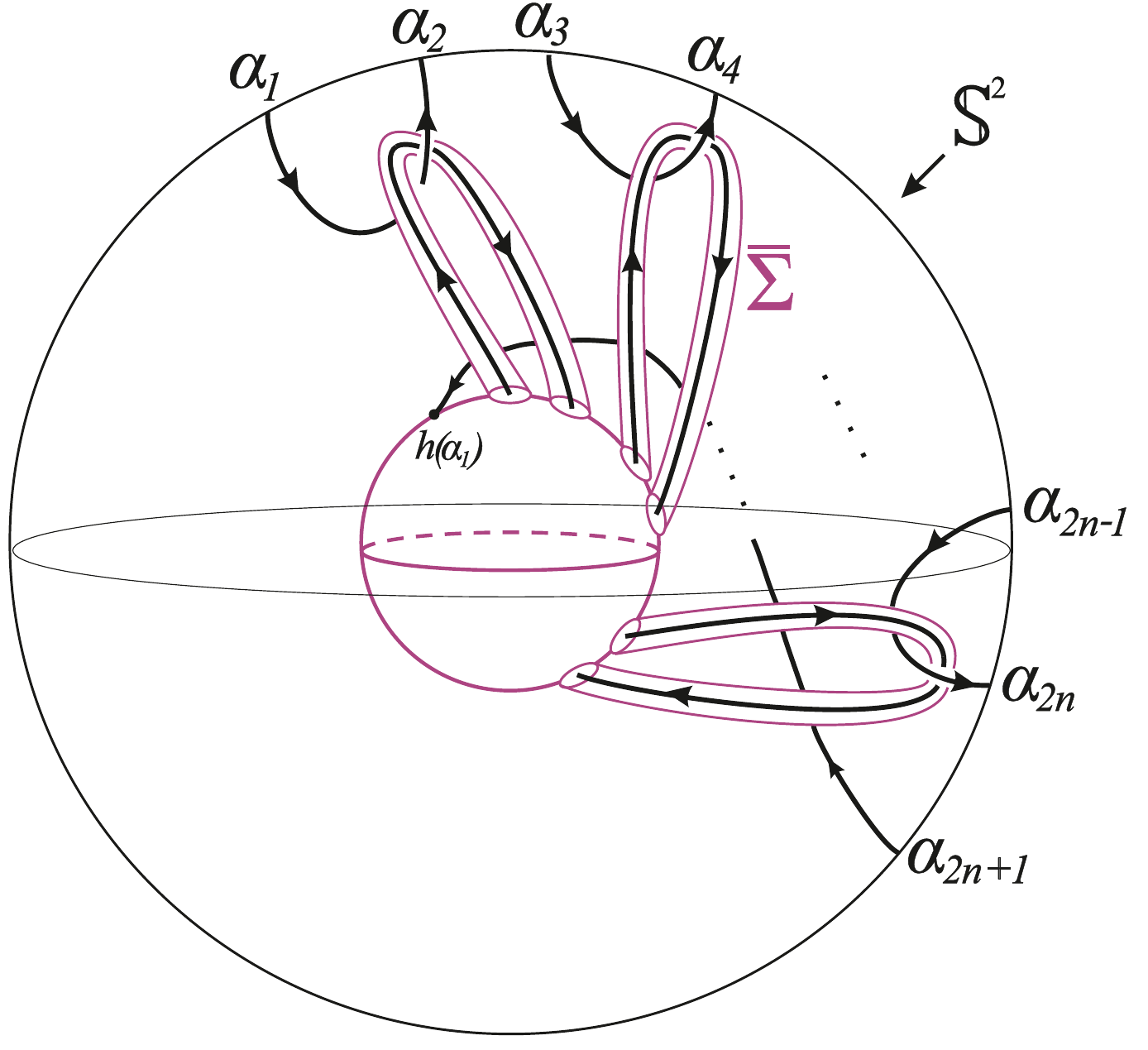}}\caption{\small A secant surface of $L_{n}$ of genus $n$}\label{mnt}
\end{figure}

By Lemma \ref{rod1} there exists a minimal secant surface $\Sigma$ for $L_n$ such that $\bar\Sigma\subset K$ and $\bar L_n\cap\bar\Sigma=h(\alpha_1)$.
{Notice (see, for instance, \cite[Exercise 2.8.1]{Dav}) that the fundamental group  $\pi_1(K\setminus\bar L_n)$ has $2n$ generators $\gamma_1,\dots,\gamma_{2n}$, each of which $\gamma_i$, $i\in\{1,\dots,2n\}$ being the generator of the punctured disk $d_i\setminus b_i$   (Fig. \ref{gene}).
\begin{figure}[h]\center{\includegraphics[width=
0.7\linewidth]{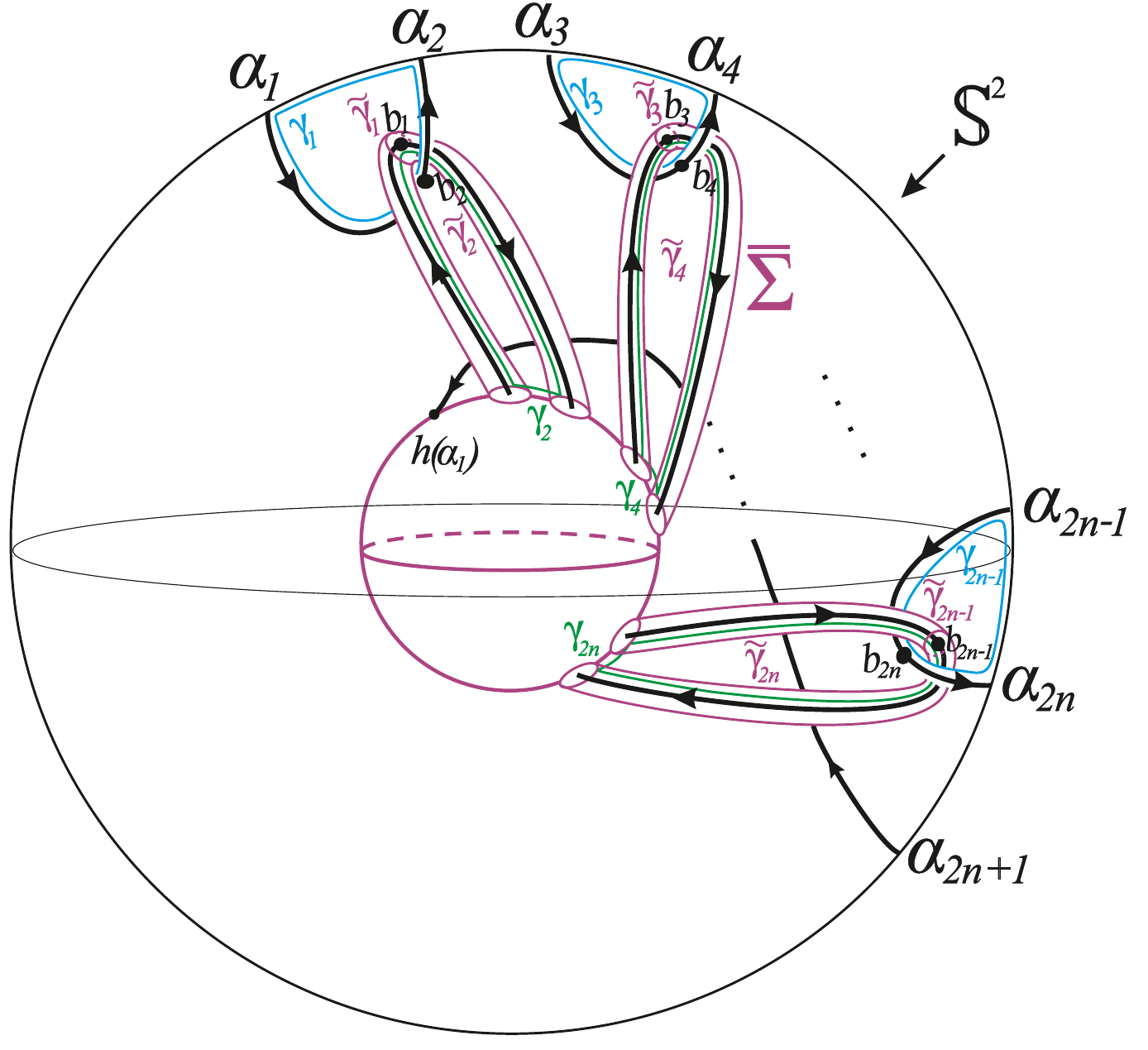}}\caption{\small Generators of the group $\pi_1(K\setminus\bar L_n)$}\label{gene}
\end{figure} Since $b_{2j-1}\in int\, Q_\Sigma$ and $c_{2j-1}\cap Q_\Sigma=\emptyset$, there exists the connected component of $\tilde d_{2j-1}$ of the intersection $d_{2j-1}\cap Q_\Sigma$ which contains the point $b_{2j-1}$.} This component is the 2-disk bounded by the curve  $\tilde\gamma_{2j-1}\subset(\bar\Sigma\setminus h(\alpha_1))$ with holes and the curves $\gamma_{2j-1},\,\tilde\gamma_{2j-1}$ are homotopic on the punctured disk $d_{2j-1}\setminus b_{2j-1}$. In the same way one finds the curves $\tilde\gamma_{2j}\subset(\bar\Sigma\setminus h(\alpha_1))$ homotopic to the curves $\gamma_{2j}$ on the punctured disk $d_{2j}\setminus b_{2j}$  (Fig. \ref{gene}). Due to Lemma \ref{compr} the surface $\bar\Sigma\setminus h(\alpha_1)$ is incompressible in $K\setminus\bar L_n$. Then the curves $\tilde\gamma_1,\dots,\tilde\gamma_{2n}$ are pairwise non-homotopic to the generators on the surface $\bar\Sigma\setminus h(\alpha_1)$. Therefore, the genus of the surface $\bar\Sigma$ cannot be less than $n$.
\end{proof}

\section{Construction of a quasi-energy function for a Pixton diffeomorphism with the Hopf knot  $L_n$}
	
Let $f$ be a Pixton diffeomorphism constructed for a generalized Mazur knot $L_n$. Then its non-wandering set $\Omega_f$ consists of four points: two sinks $\omega,\,S$, a source $N$ and a saddle $\sigma$. Then $W^u_\sigma\setminus\sigma$ consists of two separatrices $\ell_\omega,\,\ell_S$ respective closures of which contain the sinks $\omega,\,S$, the separatrice $\ell_\omega$ being tame while $\ell_S$ being wild. Let $\bar\Sigma$ be the surface of genus $n$ bounding the handlebody $Q_\Sigma$ of the same genus. Now we construct for  $f$ a Morse-Lyapunov function with $4+2n$ critical points. 

Our construction of a quasi-energy function is analogous to the construction of an energy function in \cite{GrLauPo2009}. 
	
\begin{enumerate}
\item Choose an energy function $\varphi _ {p}:U_p\to\mathbb R$ in the neighborhood of each fixed point $p$ of $f$ so that $\varphi _ {p}(p)=\dim\, W^u_p$. Let $B_\omega,\,B_S$ be the 3-balls which are the level sets of respective functions $\varphi_\omega,\,\varphi_S$ and such that $B_S\subset int\,Q_\Sigma$. 
 Choose a tubular neighborhood $T_\sigma$ of the arc $W^u_\sigma\setminus(B_\omega\cup Q_\Sigma)$ so that the handlebody $B_\omega\cup Q_\Sigma\cup T_\sigma$ of genus $n$ is a trapping neighborhood of $\omega$ and its intersection with $W^s_\sigma$ is the 2-disk. Denote by  $P^+$ the smoothing of this body by {addition of a small exterior collar}.

\item Due to \cite[Section 4.3]{GrLauPo2009} there exists an energy function $\varphi:P^+\setminus int\,Q_\Sigma$ whose value on $\partial P^+$ is  $4/3$, whose value on $\bar\Sigma$ is $2/3$ and which has exactly two critical points $\omega,\,\sigma$ of respective Morse indexes $0,1$. The disks  $d_1,\dots,d_{2n-1}$ cut the handlebody $Q_\Sigma$ making the 3-ball. Denote by $B_\Sigma$ the smoothing of this ball by removal of the {interior collar}. The results of the classic Morse theory (see, for example, 
\cite{Milnor2016}) allow to extend the function $\varphi$ to the set $Q_\Sigma\setminus int\,B_\Sigma$ in such way that it has $n$ critical points of Morse index 1, one point lying on each disk $d_1,\dots,d_{2n-1}$, while the value of $\varphi$ on $\partial B_{\Sigma}$ is $1/3$. Due to \cite[Lemma 4.2]{GrLauPo2009} the function $\varphi$ can be extended to the ball $B_\Sigma$ by an energy function with the unique critical point $S$ of Morse index 0. Since $f(Q_\Sigma)\subset int\,B_\Sigma$, the constructed function decreases along the trajectories of the diffeomorphism $f$.

\item Denote by $B^+$ a smooth 3-ball obtained as the union $P^-$ with tubular neighborhoods of the disks $d_2,\dots,d_{2n}$. Then $\partial B^+$ is a 2-sphere which belongs to $W^u_N$ and, hence, bounds a 3-ball $B^-$ there such that $B^+\cup B^-=\mathbb S^3$.  Thus, $P^-=\mathbb S^3\setminus int\,P^+$ is the handlebody of genus $n$.  The results of the classic Morse theory (see, for example, 
\cite{Milnor2016}) allow to extend the function $\varphi$ to the set $P^-\setminus int\,B^-$ in such way that it has $n$ critical points of Morse index 2, one point lying on each disk $d_2,\dots,d_{2n}$, while the value of $\varphi$  on $\partial B^-$ is $5/3$. According to \cite[Lemma 4.2]{GrLauPo2009} the function $\varphi$ can be extended to the ball $B^-$ by an energy function with unique critical point $N$ of Morse index 3. Since $f^{-1}(P^-)\subset int\,B^-$, the constructed function decreases along the trajectories of the diffeomorphism $f$ and, therefore, it is the desired quasi-energy function.
\end{enumerate}

\newpage

\bibliography{biblio}
\bibliographystyle{ijmart}

\end{document}